\newtheorem{theorem}{Theorem}
\newtheorem{lemma}[theorem]{Lemma}
\theoremstyle{definition}
\theoremstyle{remark}
\newtheorem{remark}[theorem]{Remark}
\numberwithin{equation}{section}
\newcommand{\Hi}{\mathbb{H}}
\newcommand{\Sf}{\mathbb{S}}
\newcommand{\R}{\mathbb{R}}
\newcommand{\Lo}{\mathbb{L}}
\newcommand{\E}{\mathbb{E}}
\newcommand{\Q}{Q^{n}(\varepsilon)}
\begin{document}

\title{Einstein hypersurfaces of $\mathbb{S}^n \times \mathbb{R}$ and $\mathbb{H}^n \times \mathbb{R}$}

\author{Benedito Leandro}
\address{Benedito Leandro - Instituto de Matem\'atica e Estat\'istica, Universidade Federal de Goi\'as, Goi\^ania, 74001-970, Brazil}
 \email{bleandroneto@ufg.br}

\author{Romildo Pina}
\address{Romildo Pina - Instituto de Matem\'atica e Estat\'istica, Universidade Federal de Goi\'as, Goi\^ania, 74001-970, Brazil}
\email{romildo@ufg.br}

\author{Jo\~ao Paulo dos Santos}
\address{Jo\~ao Paulo dos Santos - Departamento de Matem\'atica, Universidade de
Bras\'ilia, 70910-900, Bras\'ilia-DF, Brazil}
\email{joaopsantos@unb.br}
\thanks{The third author was supported by FAPDF 0193.001346/2016}

\subjclass[2010]{53B25; 53C40, 53C42}

\keywords{hypersurfaces in product spaces, Einstein manifolds, constant sectional curvature}

\begin{abstract}

In this paper, we classify the Einstein hypersurfaces of $\mathbb{S}^n \times \mathbb{R}$ and $\mathbb{H}^n \times \mathbb{R}$. We use the characterization of the hypersurfaces of  $\mathbb{S}^n \times \mathbb{R}$ and $\mathbb{H}^n \times \mathbb{R}$ whose tangent
component of the unit vector field
spanning the factor $\mathbb{R}$ is a principal direction and the theory of isoparametric hypersurfaces of space forms to show that Einstein hypersurfaces of $\mathbb{S}^n \times \mathbb{R}$ and $\mathbb{H}^n \times \mathbb{R}$ must have constant sectional curvature. 

\end{abstract}

\maketitle

\section{Introduction} 


A Riemannian manifold $(M^n,g)$ is said to be Einstein if its Ricci tensor is proportional to the metric, i.e., if $Ric_M = \rho g$, for some constant $\rho \in \mathbb{R}$. Equivalently, $(M^n,g)$ is an Einstein manifold if it has constant Ricci curvature and,  according to Besse \cite{besse}, constant Ricci curvature could be considered as a good generalization of the concept of constant sectional curvature. Also, as pointed out in \cite{besse}, there are several results in the literature justifying that an Einstein metric is a good candidate  for a ``best'' metric on a given manifold. When $n=2$, the Einstein condition means constant Gaussian curvature whereas a simple calculation shows that, when $n=3$, a manifold $(M^n,g)$ is Einstein if and only if it has constant sectional curvature.

This paper aims to prove that an isometric immersion of an Einstein manifold $M^n$ as a hypersurface of the Riemannian products $\mathbb{S}^n \times \R$ and $\mathbb{H}^n \times \mathbb{R}$ only occur when $M^n$ has constant sectional curvature. More precisely, let us denote by $\Q$ the unit sphere $\mathbb{S}^n$, if $\varepsilon=1$, or the hyperbolic space $\mathbb{H}^n$, if $\varepsilon=-1$. With this notation, our main theorem is given as the following:

\begin{theorem}
Let $f: M^n \rightarrow Q^{n}(\varepsilon) \times \mathbb{R}$, $n > 3$, be an isometric immersion of an Einstein manifold. Then $M^n$ is a manifold with constant sectional curvature.  \label{thm-einstein}
\end{theorem}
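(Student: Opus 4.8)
The plan is to combine the Gauss equation of the immersion with the Einstein condition to produce an algebraic identity for the shape operator $A$, deduce from it that the tangential part of $\partial_t$ is a principal direction, and then invoke the cited characterization of such hypersurfaces together with the classification of isoparametric hypersurfaces of space forms to determine $A$ well enough to read off constant sectional curvature.

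First I would fix the standard set-up: along $M$ write $\partial_t = T + \nu N$, where $N$ is a local unit normal, $T = \partial_t^{\top}$ and $\nu = \langle\partial_t,N\rangle$, so $|T|^2+\nu^2 = 1$; since $\partial_t$ is parallel in the ambient space, $\nabla_X T = \nu AX$ and $X(\nu) = -\langle AX,T\rangle$. Substituting the explicit curvature tensor of $\Q\times\R$ into the Gauss equation and tracing yields
\begin{equation*}
Ric(X,Y) = \varepsilon\big((n-1)-|T|^2\big)\langle X,Y\rangle - \varepsilon(n-2)\langle X,T\rangle\langle Y,T\rangle + nH\langle AX,Y\rangle - \langle A^2X,Y\rangle,
\end{equation*}
with $nH = \operatorname{tr}A$. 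Imposing $Ric = \rho\langle\,\cdot\,,\,\cdot\,\rangle$ and reading both sides as endomorphisms produces
\begin{equation*}
A^2 - nHA = \alpha\,I - \varepsilon(n-2)\,T\otimes T^{\flat},\qquad \alpha := \varepsilon\big((n-1)-|T|^2\big)-\rho, \tag{$\star$}
\end{equation*}
where $T\otimes T^{\flat}$ denotes the endomorphism $X\mapsto\langle X,T\rangle T$. Since $A$ commutes with $A^2-nHA$, identity $(\star)$, together with $n-2\neq0$, forces $A$ to commute with the rank-one endomorphism $T\otimes T^{\flat}$; evaluating the resulting commutator on $T$ shows that $AT$ is a multiple of $T$ wherever $T\neq0$. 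Where instead $T$ vanishes on an open set, $N=\pm\partial_t$ and $M$ is an open piece of a slice $\Q\times\{t_0\}$, of constant curvature $\varepsilon$; so I would henceforth work on the (dense) open set $\mathcal U = \{T\neq0\}$, on which $T$ is a principal direction.

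On $\mathcal U$ write $AT=\lambda T$. Then $T^{\perp}$ is $A$-invariant, and $(\star)$ restricted to it reads $(A|_{T^{\perp}})^2 - nH\,A|_{T^{\perp}} = \alpha I$, so the principal curvatures transverse to $T$ take at most the two values $\mu_{\pm}$ solving $x^2-nHx-\alpha=0$, while $(\star)$ applied to $T$ gives $\lambda^2-nH\lambda = \alpha - \varepsilon(n-2)|T|^2$. In an adapted orthonormal frame $e_1 = T/|T|,\,e_2,\dots,e_n$ of eigenvectors of $A$, the Gauss equation gives $K(e_1,e_i) = \varepsilon\nu^2 + \lambda\mu_i$ and $K(e_i,e_j) = \varepsilon + \mu_i\mu_j$ for $i,j\geq2$. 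If $A|_{T^{\perp}}$ has a single eigenvalue $\mu$, then $nH = \lambda+(n-1)\mu$ together with the relations above forces $\mu(\lambda-\mu) = \varepsilon|T|^2$, whence a short computation with the Gauss equation yields $R(X,Y)Z = (\varepsilon+\mu^2)\big(\langle Y,Z\rangle X - \langle X,Z\rangle Y\big)$, so $M$ has constant sectional curvature. The problem thus reduces to excluding the case in which $A|_{T^{\perp}}$ has two distinct eigenvalues $\mu_+\neq\mu_-$, each of positive multiplicity (for then the curvatures $K(e_i,e_j)$ above cannot all agree when $n>3$).

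The hard part is exactly this exclusion, and here the characterization of hypersurfaces with $T$ a principal direction and the theory of isoparametric hypersurfaces of space forms enter. By that characterization, $f$ is locally a slice, a Riemannian product $L^{n-1}\times\R$ with $L^{n-1}$ a hypersurface of $\Q$, or is built from a hypersurface $g\colon L^{n-1}\to\Q$, its parallel hypersurfaces, and the $\R$-factor; in each case the principal curvatures of the relevant hypersurface of $\Q$ are controlled by $\lambda,\mu_{\pm},\nu$, and $(\star)$ forces that hypersurface to be isoparametric with at most two distinct principal curvatures. Cartan's classification then leaves only the umbilic hypersurfaces and the (generalized) Clifford tori $\mathbb{S}^k\times\mathbb{S}^{n-1-k}\subset\mathbb{S}^n$ and $\mathbb{S}^k\times\mathbb{H}^{n-1-k}\subset\mathbb{H}^n$; substituting each back into $(\star)$ and the Einstein equation, the product case forces $\rho=0$ with $L$ flat (so $L\times\R$ has constant curvature $0$), the umbilic cases return to the single-eigenvalue situation above, and the Clifford-type cases turn out to be incompatible with $(\star)$ unless $n=3$, which is excluded by hypothesis. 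Hence $A|_{T^{\perp}}$ is umbilic and $M$ has constant sectional curvature. The restriction $n>3$ is used precisely in this last step.
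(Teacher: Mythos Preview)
Your overall strategy and the first four steps match the paper's proof: the Ricci tensor formula, the conclusion $AT\parallel T$ (your commutator argument is a clean variant of the paper's coordinate computation in Lemma~\ref{eigen-t}), the reduction to at most two transverse eigenvalues via the quadratic $x^2-nHx=\alpha$, and the single-eigenvalue case via $\mu(\lambda-\mu)=\varepsilon|T|^2$ are all correct and essentially what the paper does. The genuine gaps are in the exclusion of the two-eigenvalue case, which is where the real content lies.

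You assert that $(\star)$ forces the underlying hypersurface $g$ of $\Q$ to be isoparametric and that the resulting Clifford-type cases are incompatible with $(\star)$, but neither step is carried out, and the paper's route is more specific than an appeal to Cartan's classification. The paper first proves $\lambda\equiv 0$ (the principal curvature along $T$) by contradiction: if $\lambda\neq0$, then in Tojeiro's local model the Einstein equation along $T$, namely $\varepsilon(n-1)\nu^2+\lambda(nH-\lambda)=\rho$, forces the mean curvature of each parallel $g_s$ to depend only on $s$, so $g$ is isoparametric; Cartan's \emph{identity} $\lambda_1^g\lambda_2^g=-\varepsilon$ then yields $\mu_+\mu_-=-\varepsilon|T|^2$, and comparison with $\mu_+\mu_-=\rho-\varepsilon(n-1-|T|^2)$ makes $|T|$ constant, whence $\lambda=d|T|/ds=0$. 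With $\lambda\equiv0$ one obtains $(p-1)\lambda_1^s+(q-1)\lambda_2^s=0$ and $\lambda_1^s\lambda_2^s=-\varepsilon(n-2)$; since $p+q=n-1>2$ these force the $\lambda_i^s$ constant, and Cartan's identity at $s=0$ gives $\lambda_1^g\lambda_2^g=-\varepsilon$, contradicting the second equation unless $n=3$. Your sketch misses both the $\lambda$-dichotomy and this explicit numerical contradiction, and invokes the full classification where only Cartan's identity is needed. Finally, your handling of the product case $\nu\equiv0$ is off: you conclude ``$L$ flat, so $L\times\R$ has constant curvature $0$,'' but you are in the two-eigenvalue branch, and a flat $L^{n-1}\subset\Q$ with two distinct principal curvatures and $n-1\geq3$ is impossible by Gauss ($\varepsilon+\mu_+^2=0$ and $\varepsilon+\mu_+\mu_-=0$ force $\mu_+=\mu_-$); this should be recorded as a contradiction, not as an instance of constant curvature.
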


Isometric immersions of Einstein manifolds into space forms were considered initially in codimension 1 by Thomas \cite{thomas}, followed by Fialkow \cite{fialkow} and the full  classification in this case was concluded by Ryan \cite{ryan}. Briefly, an Einstein hypersurface of a space form of curvature $\varepsilon$ must have constant sectional curvature, except for the case $\varepsilon=1$, where we can find a product of spheres as Einstein hypersurfaces. For arbitrary codimensions, Einstein submanifolds of space forms were considered recently under the hypothesis of having flat normal bundle. Onti \cite{onti} classified such submanifolds with parallel mean curvature, whereas Dajczer, Onti and Vlachos \cite{dajczer} proved that Einstein submanifolds of space forms with flat normal bundle are locally holonomic.


The study of the intrinsic geometry of hypersurfaces in $\Hi^n \times \R$ and $\Sf^n \times \R$ has been drawn much attention in recent years \cite{aledo1, aledo2, Veken2, Veken3, manfio, novais, Veken1} . Particularly, hypersurfaces with constant sectional curvature were considered by  Aledo, Espinar and Galvez \cite{aledo1, aledo2}, for the two-dimensional case and by Manfio and Tojeiro, \cite{manfio}, for higher dimensions. When $n\geq 4$, Manfio and Tojeiro have proved that a hypersurface with constant sectional curvature $c$ only exists when $c \geq \varepsilon$ and it must be an open part of a complete rotation hypersurface. When $n=3$, $c \in (0,1)$ if $\varepsilon=1$ and $c \in (-1,0)$ if $\varepsilon=-1$. In this case, the hypersurface is constructed explicitly using parallel surfaces in $Q^3(\varepsilon)$. Consequently, the results given by Manfio and Tojeiro in \cite{manfio} and Theorem \ref{thm-einstein} completely solve the problem of the classification of Einstein hypersurfaces in $\Q \times \R$.

\section{Preliminary notions and results}

In this section we will present some preliminary notions and results that will be used in the proof of Theorem \ref{thm-einstein}. Let us first establish some notation. As said before, we will denote by $Q^n(\varepsilon)$ the unit sphere $\Sf^n$, if $\varepsilon=1$, or the hyperbolic space $\Hi^n$ if $\varepsilon=-1$. The Riemannian manifold  $Q^{n}(\varepsilon)\times\mathbb{R}$ will be given in the following models:
$$
\begin{array}{rcl}
        {\Sf}^n \times \R &=& \left\{(x_1,\ldots,x_{n+2})\in\E^{n+2}|\;x_1^2+x_2^2+\ldots+x_{n+1}^2=1\right\}, \\
        {\Hi}^n \times \R &=& \left\{(x_1,\ldots,x_{n+2})\in\Lo^{n+2}|-x_1^2+x_2^2+\ldots+x_{n+1}^2=-1, x_1>0\right\},
\end{array}
$$
with the metric induced by the ambient space. Here $\E^{n+2}$ is the $(n+2)-$dimensional Euclidean space and $\Lo^{n+2}$ is the $(n+2)-$dimensional Lorentzian space with the canonical metric $ds^2=-dx_1^2+dx_2^2+ \ldots +dx_{n+2}^2$.

Let $f : M^n \rightarrow Q^{n}(\varepsilon)\times\mathbb{R}$ be a hypersurface. Denote by $N$ its unit normal and let $\partial_{x_{n+2}}$ be the coordinate vector field of the factor $\R$. Also, let us denote by $T$ the orthogonal projection of  $\partial_{x_{n+2}}$ onto the tangent space of  $M^n$. With this notation, we have the following decomposition
\begin{equation}
\partial_{x_{n+2}}=T+\nu N, \label{decomposition}
\end{equation}
where $\nu$ is a smooth function defined in $M^n$, called \emph{angle function}. 
Let $\nabla$ and $R$ be the Riemannian connection and the curvature tensor of  a hypersurface $f : M^n \rightarrow Q^n(\varepsilon) \times \R$, respectively. It will be considered the following sign convention: $R(X,Y)Z = \nabla_X \nabla_Y Z - \nabla_Y \nabla_X Z - \nabla_{[X,Y]} Z$.  If we denote by $S$ its shape operator, the Gauss equation is given by
    \begin{multline}\label{gauss}
        \langle R(X,Y)Z,W\rangle=\varepsilon(\langle X,W\rangle\langle Y,Z\rangle-\langle X,Z\rangle\langle Y,W\rangle\\
       + \langle X,T\rangle\langle Z,T\rangle\langle Y,W\rangle+\langle Y,T\rangle\langle W,T\rangle\langle X,Z\rangle\\
        -\langle Y,T\rangle\langle Z,T\rangle\langle X,W\rangle-\langle X,T\rangle\langle W,T\rangle\langle Y,Z\rangle)\\
        +\langle SX,W\rangle\langle SY,Z\rangle-\langle SX,Z\rangle\langle SY,W\rangle.
    \end{multline}
Moreover, since the vector field $\partial_{x_{n+2}}$ is parallel in $\Q\times\R$, we have
    \begin{equation}\label{Xcos}
    \begin{array}{rcl}
        \nabla_{X}T&=&\nu SX, \\
           X[\nu]&=&-\langle X,ST\rangle.
     \end{array}    
    \end{equation}
        
At this point, we present a fundamental result that will be used in the proof of Theorem 1. In \cite{tojeiro}, Tojeiro presented a characterization of the hypersurfaces for which $T$ is principal direction. Such characterization is given as follows. 

Let $g: \overline{M}^{n-1} \rightarrow Q^n(\varepsilon)$  be a hypersurface and let $g_s : \overline{M}^{n-1} \rightarrow Q^n (\varepsilon)$, $s \in I \subset \mathbb{R}$, be its family of parallel hypersurfaces, given by
\begin{equation}
g_s(x) = C_{\varepsilon} (s) g(x) + S_{\varepsilon} (s) N(x), \label{parallel-gs}
\end{equation}
where $x \in \overline{M}^{n-1}$, $N$ is a unit normal vector field to $g$ and the functions $C_{\varepsilon}$ and $S_{\varepsilon}$ are given by
\begin{equation}
C_{\varepsilon}(s) = \left\{ 
\begin{array}{l}
\cos(s), \, \textnormal{ if }\, \varepsilon = 1, \\
\cosh(s), \, \textnormal{ if }\, \varepsilon = -1,
\end{array}
\right. \,\,\, \textnormal{ and } \,\,\, 
S_{\varepsilon}(s) = \left\{ 
\begin{array}{l}
\sin(s), \, \textnormal{ if }\, \varepsilon = 1, \\
\sinh(s), \, \textnormal{ if }\, \varepsilon = -1.
\end{array}
\right. \label{e-functions}
\end{equation}
Let $f: M^n := \overline{M}^{n-1} \times I \rightarrow Q^n(\varepsilon) \times \mathbb{R}$ be a hypersurface defined by
\begin{equation}
f(x,s) = g_s(x) + a(s) \partial_{n+2}, \label{function-tojeiro}
\end{equation}
for a smooth function $a : I \rightarrow \mathbb{R}$ with positive derivative. In this context, the following theorem provides the mentioned characterization:

\begin{theorem}[\cite{tojeiro}]
Let $f$ be the map given in (\ref{function-tojeiro}), where $g_s$ is defined by (\ref{parallel-gs}). Then the map $f$ defines, at regular points, a hypersurface that has $T$ as
a principal direction. Conversely, any hypersurface $f: M^n \rightarrow Q^n(\varepsilon) \times \mathbb{R}$, $n \geq 2$, with nowhere vanishing angle function that has $T$ as a principal direction is locally given in this way.
\label{thm-tojeiro}
\end{theorem}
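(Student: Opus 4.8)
The plan is to prove the two implications separately, working throughout in the flat (resp. Lorentzian) ambient $\mathbb{E}^{n+2}$ (resp. $\mathbb{L}^{n+2}$) with the orthogonal splitting $Q^n(\varepsilon)\times\mathbb{R}$, and writing $f=(F,h)$, where $F$ is the $Q^n(\varepsilon)$-component of $f$ and $h=\langle f,\partial_{n+2}\rangle$ is the height. A preliminary remark used on both sides is that, for $X$ tangent to $M^n$, $X[h]=\langle f_*X,\partial_{n+2}\rangle=\langle X,\partial_{n+2}\rangle=\langle X,T\rangle$ by \eqref{decomposition}, so that $\nabla h=T$.

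For the direct statement, I would differentiate \eqref{function-tojeiro}. Writing $g_s=C_{\varepsilon}(s)g+S_{\varepsilon}(s)N$ as in \eqref{parallel-gs} and using $C_{\varepsilon}'=-\varepsilon S_{\varepsilon}$ and $S_{\varepsilon}'=C_{\varepsilon}$, one gets $f_*\partial_s=\partial_s g_s+a'(s)\partial_{n+2}$ with $\partial_s g_s=-\varepsilon S_{\varepsilon}g+C_{\varepsilon}N$, and $f_*X=(g_s)_*X$ for $X$ tangent to $\overline{M}^{n-1}$. A short computation using $\langle g,g\rangle=\varepsilon$, $\langle N,N\rangle=1$ and $\langle g,N\rangle=0$ shows that $\partial_s g_s$ is a unit field normal to $g_s$ inside $Q^n(\varepsilon)$, hence orthogonal to every $(g_s)_*X$; consequently $f_*\partial_s\perp f_*X$ and the unit normal of $f$ is forced to be $N_f=W^{-1}(a'\partial_s g_s-\partial_{n+2})$ with $W=\sqrt{1+a'^2}$. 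Since $\partial_{n+2}$ is orthogonal to every $f_*X$, its tangential part $T$ is a multiple of $f_*\partial_s$. To see that this direction is principal, I would compute the flat derivative of $N_f$ along the $s$-curve: because $\partial_s^2 g_s=-\varepsilon g_s$ is normal to $Q^n(\varepsilon)\times\mathbb{R}$ in the ambient, while $f_*X$ is orthogonal to each of $g_s$, $\partial_s g_s$ and $\partial_{n+2}$, one obtains $\langle\widetilde\nabla_{f_*\partial_s}N_f,f_*X\rangle=0$, i.e. the second fundamental form carries no mixed term. Hence $S(f_*\partial_s)$ is a multiple of $f_*\partial_s$, so $T$ is a principal direction.

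For the converse, assume $\nu$ nowhere zero and $ST=\lambda T$. The distribution $\mathcal{D}=T^{\perp}$ is integrable: for $X,Y\in\mathcal{D}$ one has $\langle[X,Y],T\rangle=-\langle Y,\nabla_X T\rangle+\langle X,\nabla_Y T\rangle=\nu(\langle SX,Y\rangle-\langle SY,X\rangle)=0$ by \eqref{Xcos} and the symmetry of $S$. Since $\nabla h=T$, the leaves of $\mathcal{D}$ are exactly the level sets $M_t=h^{-1}(t)$, and $F$ restricted to such a leaf is a hypersurface $g$ of $Q^n(\varepsilon)$. Moreover \eqref{Xcos} gives $X[\nu]=-\langle SX,T\rangle=-\lambda\langle X,T\rangle=0$ for $X\in\mathcal{D}$, so $\nu$ is constant along each leaf. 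I would then flow by the unit field $V=T/\abs{T}$, with $\abs{T}^2=1-\nu^2>0$ on the region where $\nu^2<1$, identifying $M^n$ locally with $\overline{M}^{n-1}\times I$; from $X[h]=\langle X,T\rangle$ one finds $\tfrac{d}{ds}(h\circ\gamma)=\abs{T}=\sqrt{1-\nu^2}$, a function of $s$ alone, which yields the height $a(s)$ of \eqref{function-tojeiro} with $a'>0$, independent of the chosen leaf point.

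It then remains to identify the $Q^n(\varepsilon)$-components of the leaves as the parallel family \eqref{parallel-gs}. Writing $N_f=(\eta,\nu)$ in the splitting, so that $\eta$ is tangent to $Q^n(\varepsilon)$ with $\abs{\eta}^2=1-\nu^2$, the $Q^n(\varepsilon)$-part of $f_*V$ equals $-\nu\,\eta/\abs{\eta}$, which exhibits $\widehat{\eta}=\eta/\abs{\eta}$ as a unit normal of the leaf $g$ inside $Q^n(\varepsilon)$ and gives $F'=-\nu\widehat{\eta}$ along $\gamma$. Differentiating once more in the flat ambient and combining \eqref{Xcos}, the Weingarten formula of $M^n$ and the Gauss formula of $Q^n(\varepsilon)$ in flat space (for which the position $F$ is the normal and $D_X F=X$), I expect to obtain, after a suitable reparametrization of the flow, that $F$ solves $F''=-\varepsilon F$ along each flow line with $F(0)=g(x)$ and $F'(0)$ proportional to $N(x)$. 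This is precisely the equation whose solution is $C_{\varepsilon}(s)g(x)+S_{\varepsilon}(s)N(x)$, so the leaf at parameter $s$ is the parallel hypersurface $g_s$ and $f(x,s)=g_s(x)+a(s)\partial_{n+2}$, as claimed. The main obstacle is exactly this last step: controlling how the leaf normal $\widehat{\eta}$ evolves along the flow and reducing the evolution of $F$ to the concircular equation $F''=-\varepsilon F$, which is where the principal-direction hypothesis $ST=\lambda T$, the structure equations \eqref{Xcos}, and the extrinsic geometry of $Q^n(\varepsilon)$ in the flat ambient must be brought together carefully.
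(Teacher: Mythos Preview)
The paper does not prove this theorem. Theorem~\ref{thm-tojeiro} is quoted from Tojeiro's paper \cite{tojeiro} as a preliminary result and used as a black box in the proof of Theorem~\ref{thm-einstein}; no argument for it appears anywhere in the present text. There is therefore nothing in the paper to compare your proposal against.

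As for the proposal itself: the direct implication is essentially complete and correct. For the converse, your set-up (integrability of $T^{\perp}$ via \eqref{Xcos}, constancy of $\nu$ along the leaves, recovery of the height $a(s)$ from $dh/ds=\abs{T}$) is the right one and matches the argument in \cite{tojeiro}. The gap is exactly the one you identify: you have not actually shown that the $Q^n(\varepsilon)$-projection $F$ of each flow line of $T/\abs{T}$ satisfies the concircular equation $F''=-\varepsilon F$ after reparametrization. Concretely, one reparametrizes so that the $Q^n(\varepsilon)$-component of the flow becomes unit-speed (i.e., $d\sigma/ds=\nu$, since the $Q^n(\varepsilon)$-part of $f_*V$ is $-\nu\widehat\eta$), and then must verify that the derivative of $\widehat\eta$ along the flow has no component tangent to the leaf; this is precisely where the hypothesis $ST=\lambda T$ enters, via the ambient Weingarten formula $D_V N_f=-\lambda V+\textnormal{(normal to }Q^n(\varepsilon)\times\mathbb{R}\textnormal{)}$. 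Until that computation is carried out, the converse is only a sketch.
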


\begin{remark}
The hypersurfaces with the property of the vector field $T$ being a principal direction constitute an important class of hypersurfaces of $Q^n(\varepsilon) \times \R$. This class of hypersurfaces includes the rotation hypersurfaces \cite{Veken3}, the hypersurfaces with constant sectional curvature  \cite{manfio} and the hypersurfaces whose normal direction makes a constant angle with the vector field $\partial_{x_{n+2}}$  \cite{dillen2, dillen3, manfio, tojeiro}. Besides that, it was proved in \cite{tojeiro} that such a property is equivalent to $M^n$ has flat normal bundle as a submanifold into $\E^{n+2}$, resp. $\Lo^{n+2}$. This fact was also obtained for the two-dimensional case in  \cite{Veken4, dillen}, where surfaces of $Q^2(\varepsilon) \times \R$ having $T$ as a principal direction were considered. 
\end{remark}

For a hypersurface given locally by (\ref{function-tojeiro}), one has:
\begin{eqnarray}
|T| = \dfrac{a'(s)}{\sqrt{1+a'(s)^2}}, \label{mt} \\
\nu = \dfrac{1}{\sqrt{1+(a'(s))^2}}. \label{angle-function}
\end{eqnarray}
Also, the principal curvatures are given by
\begin{equation}
\begin{array}{rcl}
\lambda_i &=& - \dfrac{a'(s)}{\sqrt{1+a'(s)^2}} \lambda_i^s, \, 1 \leq i \leq n-1, \\
\lambda_n &=& \dfrac{a''(s)}{(\sqrt{1+a'(s)^2})^3},
\end{array} \label{principal-curvatures}
\end{equation}
where $\lambda_n$ is the principal curvature associated to $T$ and $\lambda_i^s, \, 1 \leq i \leq n-1,$  are the principal curvatures of $g_s$, i.e., 
\begin{equation}
\lambda_i^s = \dfrac{\varepsilon S_{\varepsilon}(s) + \lambda_i^g C_{\varepsilon}(s)}{C_{\varepsilon}(s) - \lambda_i^g S_\varepsilon(s)} \label{principal-curvature-s}.
\end{equation}
where $\lambda_i^g,\, 1 \leq i \leq n-1,$ are the principal curvatures of $g$. Finally, let us observe that, by equations (\ref{mt}) and  (\ref{principal-curvatures}) we have
\begin{equation}
\lambda_n = \dfrac{d|T|}{ds}. \label{kn-edo}
\end{equation}

We also present in this section two results regarding isoparametric hypersurfaces in space forms. We may suggest to the reader as references the survey \cite{cecil} or Section 3.1 in \cite{cecil-book}.  Let us recall that $g: \overline{M}^{n-1} \rightarrow \Q$ is said to be an isoparametric hypersurface if it has constant principal curvatures. In \cite{cartan-iso}, Cartan proved that a hypersurface $g: \overline{M}^{n-1} \rightarrow \Q$ is isoparametric if and only if each parallel hypersurface $g_s$ as given in \eqref{parallel-gs} has constant mean curvature, i.e., the mean curvature of $g_s$ depends only on $s$ (see Theorem 3.6 in \cite{cecil-book}). In the same paper, Cartan established an important relation between the principal curvatures of isoparametric hypersurfaces. This relation is known as \emph{Cartan's identity} (or \emph{Cartan's formula}, following \cite{cecil-book}, page 91) and it is given as follows: let $g: \overline{M}^{n-1} \rightarrow \Q$ be an isoparametric hypersurface with $d$ distinct principal curvatures and respective multiplicities $m_1, \ldots, m_d$. If $d>1$, for each $i$, $1 \leq i \leq d$ one has
\begin{equation}
\displaystyle \sum_{j \neq i} m_j \dfrac{\varepsilon+\lambda_i \lambda_j}{\lambda_i - \lambda_j} = 0. \label{cartan-id}
\end{equation}

In order to prove Theorem 1, we will need the following lemmas. The first will establish the Ricci tensor on a hypersurface $f: M^{n} \rightarrow \Q \times \mathbb{R}$ while the second will show that, on an Einstein hypersurface, the vector field $T$ is an eigenvector of the shape operator at $p \in M$, as long as $T \neq 0$ at $p$. 

In what follows, the Ricci tensor is given by  
\begin{equation}
\textnormal{Ric}(Y,Z)=\textnormal{trace}\left\{ X \mapsto R(X,Y)Z \right\}. \label{ricci-general}
\end{equation}
\begin{lemma}
Let $M^n$ be a hypersurface in $\Q \times \R$, then the Ricci tensor of $M^n$ is given by
\begin{equation}
\begin{array}{rcl}
\textnormal{Ric}(Y,Z) &=& \varepsilon(n-1-|T|^2) \langle Y, Z \rangle + \varepsilon (2-n) \langle Y, T \rangle \langle Z, T \rangle \\
&& + n H \langle SY, Z \rangle - \langle SY, SZ \rangle, 
\end{array}
\label{ricci-tensor}
\end{equation}
where $Y, \, Z$ are arbitrary vector fields on $M^n$ and $H$ is the mean curvature.
\end{lemma}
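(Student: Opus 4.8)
The plan is to compute the Ricci tensor directly from the Gauss equation \eqref{gauss} by taking the trace as prescribed in \eqref{ricci-general}. Fix $p \in M^n$ and choose an orthonormal basis $\{e_1, \ldots, e_n\}$ of $T_pM$. Then $\textnormal{Ric}(Y,Z) = \sum_{i=1}^n \langle R(e_i, Y)Z, e_i \rangle$, so I would substitute $X = W = e_i$ into \eqref{gauss} and sum over $i$. The computation splits into two groups of terms: the curvature-of-the-base terms (the first three lines of \eqref{gauss}, carrying the factor $\varepsilon$) and the shape-operator terms (the last line).

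For the $\varepsilon$-terms, I would use the elementary trace identities $\sum_i \langle e_i, Y\rangle \langle e_i, Z\rangle = \langle Y, Z\rangle$, $\sum_i \langle e_i, e_i\rangle = n$, $\sum_i \langle e_i, T\rangle \langle e_i, Y\rangle = \langle T, Y\rangle$, and $\sum_i \langle e_i, T\rangle^2 = |T|^2$. Applying these term by term to the six summands inside the $\varepsilon(\cdots)$ block yields, after collecting, $\varepsilon\big[(n-1)\langle Y,Z\rangle - |T|^2\langle Y,Z\rangle + (2-n)\langle Y,T\rangle\langle Z,T\rangle\big]$, which matches the first line and the $\varepsilon(2-n)\langle Y,T\rangle\langle Z,T\rangle$ term of \eqref{ricci-tensor}. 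For the shape-operator terms, I would use $\sum_i \langle Se_i, e_i\rangle = \textnormal{trace}\, S = nH$ together with the symmetry of $S$ and the identity $\sum_i \langle Se_i, Y\rangle\langle Se_i, Z\rangle = \langle SY, SZ\rangle$ (expanding $SY$ and $SZ$ in the basis). This gives $nH\langle SY, Z\rangle - \langle SY, SZ\rangle$, completing the formula.

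There is no serious obstacle here; the only thing to be careful about is bookkeeping, namely tracking which of the six base-curvature summands contribute to the $\langle Y,Z\rangle$ coefficient versus the $\langle Y,T\rangle\langle Z,T\rangle$ coefficient, and making sure the signs from the $R(X,Y)Z$ convention are respected when the roles of $X$ and $W$ are both played by $e_i$. One can double-check the result by testing it on a totally geodesic slice $Q^n(\varepsilon) \times \{t_0\}$, where $T = 0$, $S = 0$, and the formula correctly reduces to $\textnormal{Ric} = \varepsilon(n-1)\langle \cdot, \cdot\rangle$, the Ricci tensor of the space form $Q^n(\varepsilon)$.
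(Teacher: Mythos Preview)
Your proposal is correct and follows essentially the same approach as the paper: both compute $\textnormal{Ric}(Y,Z)=\sum_k \langle R(e_k,Y)Z,e_k\rangle$ directly from the Gauss equation \eqref{gauss} and collect terms. The only cosmetic difference is that the paper fixes an orthonormal basis of principal directions ($Se_i=\lambda_i e_i$), whereas you work with an arbitrary orthonormal basis and invoke the basis-free identities $\sum_i \langle Se_i,e_i\rangle=nH$ and $\sum_i \langle Se_i,Y\rangle\langle Se_i,Z\rangle=\langle SY,SZ\rangle$; the computations are otherwise identical.
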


\begin{proof}
Let $\left\{ e_i \right\}_{i=1}^n$ an orthonormal basis of principal directions, with $Se_i = \lambda_i e_i$. If we write $Y = \displaystyle \sum_{k=1}^n y_k e_k$, $Z = \displaystyle \sum_{k=1}^n z_k e_k$ and $T = \displaystyle \sum_{k=1}^n t_k e_k	$, it follows by Gauss Equation (\ref{gauss}) that 
\begin{equation}
\begin{array}{rcl}
\langle R(e_k, Y)Z, e_k \rangle &=& \varepsilon \left[ \langle Y, Z \rangle - y_k z_k + t_k y_k \langle Z, T \rangle + t_k z_k \langle Y, T \rangle - \right. \\
&& \left.  - \langle Y, T \rangle \langle Z, T \rangle - t_k^2 \langle Y, Z \rangle  \right] + \\
&& + \lambda_k \langle SY, Z \rangle - \langle e_k, SZ \rangle \langle SY, e_k \rangle. \label{gauss-equation-ricci}
\end{array}
\end{equation}
Consequently, by \eqref{gauss-equation-ricci} and \eqref{ricci-general}, the Ricci tensor is given by \eqref{ricci-tensor}.
\end{proof}

The next lemma give a characterization of Einstein hypersufaces with $T \neq 0$.

\begin{lemma}
Let $M^n$, $n > 3$, be an Einstein hypersurface in $Q^n(\varepsilon) \times \mathbb{R}$. If $T \neq 0$ at $p \in M^n$, then $T$ is an eigenvector for the shape operator at $p$. \label{eigen-t}
\end{lemma}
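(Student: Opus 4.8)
The plan is to use the Einstein condition $\mathrm{Ric} = \rho\langle\cdot,\cdot\rangle$ together with the explicit formula \eqref{ricci-tensor} to obtain an operator identity, and then to test it against vectors orthogonal to $T$. Fix $p$ with $T(p) \neq 0$ and set $e_n = T/|T|$. Writing the Einstein equation via \eqref{ricci-tensor} gives, for all tangent $Y,Z$ at $p$,
\begin{equation*}
\langle SY, SZ\rangle - nH\langle SY,Z\rangle = \big(\varepsilon(n-1-|T|^2) - \rho\big)\langle Y,Z\rangle + \varepsilon(2-n)\langle Y,T\rangle\langle Z,T\rangle.
\end{equation*}
In operator form this reads $S^2 - nH\,S = \alpha\, \mathrm{Id} + \beta\, T\otimes T^\flat$, where $\alpha = \varepsilon(n-1-|T|^2)-\rho$ and $\beta = \varepsilon(2-n)$. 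The key point is that the right-hand side is an operator that preserves the line $\mathbb{R}T$ and acts as the scalar $\alpha$ on the hyperplane $T^\perp$. I would then choose an orthonormal basis $\{e_1,\dots,e_n\}$ of $T_pM$ by first diagonalizing the symmetric operator $P := S^2 - nH\,S$: note $P$ is a polynomial in $S$, so $S$ and $P$ commute and can be simultaneously diagonalized. Hence there is an orthonormal eigenbasis $\{e_i\}$ with $Se_i = \lambda_i e_i$ and $Pe_i = (\lambda_i^2 - nH\lambda_i)e_i$.

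Next I would extract information by evaluating the identity $P = \alpha\,\mathrm{Id} + \beta\,T\otimes T^\flat$ on this basis. For each $i$, $(\lambda_i^2 - nH\lambda_i)e_i = \alpha e_i + \beta t_i T$, where $t_i = \langle e_i, T\rangle$. Taking the component along $e_j$ for $j\neq i$ yields $0 = \beta t_i t_j$; since $\beta = \varepsilon(2-n)\neq 0$ for $n>3$ (indeed for $n\geq 3$), we get $t_i t_j = 0$ for all $i\neq j$. This means at most one of the $t_i$ is nonzero, i.e., $T$ is a multiple of a single basis vector $e_k$. Since $T\neq 0$ at $p$, exactly one $t_k \neq 0$ and $T = t_k e_k$, so $T$ is an eigenvector of $S$. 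This is the whole argument; the role of the hypothesis $n>3$ is precisely to guarantee $\beta \neq 0$ (and $n>3$ rather than $n\geq 3$ may be wanted so that later steps in the main theorem have room, or to avoid the degenerate $3$-dimensional case where Einstein is automatic).

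The step requiring the most care is the simultaneous diagonalization and the justification that we may assume the eigenbasis of $S$ also diagonalizes $P$: this is immediate since $P=q(S)$ for the polynomial $q(t)=t^2-nHt$, but one should remark that $H$ is (locally) the relevant scalar and that $S$ is self-adjoint, so the spectral theorem applies at the point $p$. A secondary subtlety is ensuring the Einstein constant $\rho$ and the mean curvature $H$ are treated correctly as scalars at $p$ (they are constant, resp. smooth, but the identity is pointwise, which is all we need). Apart from these bookkeeping points, the proof is a direct consequence of the bilinear identity coming from \eqref{ricci-tensor} and the non-vanishing of the coefficient $\varepsilon(2-n)$.
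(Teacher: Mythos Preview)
Your proof is correct and follows essentially the same route as the paper: both choose an orthonormal eigenbasis of $S$, compute the off-diagonal entries of $\mathrm{Ric}$ from \eqref{ricci-tensor}, and use the Einstein condition to force $\varepsilon(2-n)\,t_i t_j=0$ for $i\neq j$. Your operator formulation $S^2-nH\,S=\alpha\,\mathrm{Id}+\beta\,T\otimes T^\flat$ is a pleasant repackaging, but the simultaneous-diagonalization remark is not really needed---once you diagonalize $S$ the polynomial $P=q(S)$ is automatically diagonal---so the argument is the paper's argument verbatim in different notation.
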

\begin{proof}
Let $\left\{ e_i \right\}_{i=1}^n$ an orthonormal basis of principal directions, with $Se_i = \lambda_i e_i$. Let us write $T = \displaystyle \sum_{k=1}^n t_k e_k$. If $T \neq 0$ at $p \in M^n$, there is at least one coefficient $t_k \neq 0$. Since $M^n$ is an Einstein manifold, its Ricci tensor satisfy
$$
\textnormal{Ric}(e_i,e_j) = \rho \delta_{ij},
$$
for some constant $\rho$. When we consider the Ricci tensor applied on the orthonormal basis $\left\{ e_i \right\}_{i=1}^n$, we have
\begin{equation}
\textnormal{Ric}(e_i,e_j) = \left[ \varepsilon(n-1-|T|^2) + n H \lambda_i - \lambda_i \lambda_j \right] \delta_{ij} + \varepsilon (2-n) t_i t_j \label{ricci-basis}.
\end{equation}
By Equation (\ref{ricci-basis}) we must have
\begin{equation}
\left[ \varepsilon(n-1-|T|^2) + n H \lambda_i - \lambda_i \lambda_j - \rho \right] \delta_{ij} + \varepsilon (2-n) t_i t_j = 0 \label{einstein-condition}
\end{equation}
and we conclude that $t_i t_j = 0$, for all $i, \, j$, with $i \neq j$.  Consequently, there is only one coefficient $t_k \neq 0$ and therefore $T = t_k e_k$ at $p$.
\end{proof}

\section{Proof of the main result}

\begin{proof}[Proof of Theorem \ref{thm-einstein}] If $T \equiv 0$, then $M^n$ is an open part of a  slice $\Q \times \left\{ t_0 \right\}$, where $t_0 \in \R$. Since the slices are isometric to $\Q$, $M^n$ is a manifold with constant sectional curvature $\varepsilon$. Otherwise, let $\Omega$ be the open, non-empty subset where $|T|> 0$. By Lemma \ref{eigen-t}, $T$ is a principal direction in $\Omega$. Without loss of generality we can write $T = t_n e_n$ and $ST = \lambda_n T$. Since $M^n$ is Einstein, we have from Equation (\ref{einstein-condition}) that
\begin{eqnarray}
 \varepsilon(n-1-|T|^2) + nH \lambda_i - \lambda_i^2 - \rho &=& 0, \, \textnormal{ for } 1 \leq i \leq n-1 \label{equation-lambda-i} \\
\varepsilon(n-1)(1-|T|^2) + nH \lambda_n - \lambda_n^2 - \rho &=& 0. \label{equation-lambda-n} 
\end{eqnarray}

Equation \eqref{equation-lambda-i} implies that we have at most two distinct principal curvatures among the $(n-1)$ first principal curvatures. In fact, from \eqref{equation-lambda-i} we have
\begin{equation}
(\lambda_i - \lambda_j)(nH - \lambda_i - \lambda_j) = 0 , \, \textnormal{ for } 1 \leq i, \, j \leq n-1. \label{lambda-i-j}
\end{equation}

Let us suppose by contradiction that there are three distinct principal curvatures $\lambda_{i_1}, \, \lambda_{i_2}, \, \lambda_{i_3}$. It follows by equation of (\ref{lambda-i-j}) that
$$
\begin{array}{rcl}
\lambda_{i_1} + \lambda_{i_2} &=& n H, \\
\lambda_{i_2} + \lambda_{i_3} &=& n H, \\
\lambda_{i_3} + \lambda_{i_1} &=& n H.
\end{array}
$$
The equations above implies that $\lambda_{i_1} =  \lambda_{i_2} = \lambda_{i_3}$, which is a contradiction.

If $\lambda_1 = \lambda_2 = \ldots = \lambda_{n-1}=\mu$, the sectional curvature is constant. In fact, by equation \eqref{gauss-equation-ricci} we have
\begin{eqnarray}
\langle R(e_i,e_j)e_j,e_i\rangle &=& \varepsilon+\mu^2, \,\,\, 1 \leq i, \, j \leq n-1 \label{sectional-ij} \\ 
\langle R(e_i,e_n)e_n,e_i\rangle &=& \varepsilon(1-|T|^2) + \mu \lambda_n.  \label{sectional-in}
\end{eqnarray}
It follows from \eqref{equation-lambda-n} that 
\begin{equation}
\varepsilon(1-|T|^2) + \mu \lambda_n = \dfrac{\rho}{n-1}, \label{constant-sectional-n}
\end{equation}
therefore equation \eqref{sectional-in} implies that $\langle R(e_i,e_n)e_n,e_i\rangle$ is constant. 
By equation (\ref{equation-lambda-i}) we have
\begin{equation}
\varepsilon(1-|T|^2) + \mu \lambda_n = \rho - (n-2)(\mu^2 + \varepsilon). \label{constant-sectional-i}
\end{equation}
When we combine equations (\ref{constant-sectional-n}) and (\ref{constant-sectional-i}), we have from \eqref{sectional-ij} that 
$$\langle R(e_i,e_j)e_j,e_i\rangle = \dfrac{\rho}{n-1}$$
and, consequently, the sectional curvature is equal to $\dfrac{\rho}{n-1}$ in $\Omega$.

Next we will show that the possibility of two distinct principal curvatures does not occur. In this case, we can consider as the two distinct principal curvatures $\lambda_1$ and $\lambda_2$ and therefore there are $p$ principal curvatures equal to $\lambda_1$ and $q$ principal curvatures equal to $\lambda_2$, with $\lambda_1 \neq \lambda_2$ and $p+q=n-1$. By equation \eqref{lambda-i-j} we have $\lambda_1 + \lambda_2 = nH$, consequently,
\begin{eqnarray}
\lambda_n &=& (1-p) \lambda_1 + (1-q) \lambda_2,  \label{first-edo} \\ 
\lambda_1 \lambda_2 &=& \rho - \varepsilon(n-1-|T|^2). \label{second-edo}
\end{eqnarray}
where (\ref{second-edo}) is obtained when we substitute $\lambda_1 + \lambda_2 = nH$ into (\ref{equation-lambda-i}).

We will show that $\lambda_n \equiv 0$ in $\Omega$ and this fact will lead us to a contradiction. If $\nu \equiv 0$, it follows by \eqref{kn-edo} that $\lambda_n \equiv 0$ in $\Omega$. Otherwise, there is a point $p_0$ where $\nu(p_0) \neq 0$ and an open neighborhood $\Omega_0 \subset \Omega$ of $p_0$ such that $\nu \neq 0$. Therefore, by Lemma \ref{eigen-t}, we can apply Theorem \ref{thm-tojeiro} to conclude that $\Omega_0$ is given locally by (\ref{function-tojeiro}). In this case, \eqref{equation-lambda-n} implies that
\begin{equation}
\begin{array}{rcl}
\varepsilon(n-1)(1-|T|^2) + \lambda_n (n-1)H_{g_s}  - \rho &=& 0,
\end{array}
\label{einstein-tojeiro}
\end{equation}
where $H_{g_s}$ is the mean curvature of the parallel $g_s$. 

Let us suppose by contradiction $\lambda_n \neq 0$ in $\Omega_0$. It follows by (\ref{einstein-tojeiro}) that $H_{g_s}$ depends only on $s$, once that equations (\ref{mt})  and  (\ref{principal-curvature-s}) imply that the functions $|T|^2$ and $\lambda_n$ depend only on $s$. In this case, the mean curvature of the parallel $g_s$ depends only on $s$ which implies that $g$ is an isoparametric hypersurface, with two distinct principal curvatures. By Cartan's identity \eqref{cartan-id} we must have 
\begin{equation}
    \lambda_1^g \lambda_2^g + \varepsilon = 0. \label{cartan-1-2}
\end{equation}
 
In this case, it follows directly from \eqref{mt}, (\ref{principal-curvatures}), (\ref{principal-curvature-s}) and \eqref{cartan-1-2} that 
\begin{equation}
\lambda_1 \lambda_2 = - \varepsilon |T|^2. \label{product-curvatures-einstein}
\end{equation}
When we replace (\ref{product-curvatures-einstein}) in (\ref{second-edo}) we have 
$$|T|^2 = \dfrac{\varepsilon(n-1)-\rho}{2\varepsilon},$$
which implies that $|T|^2$ is constant. By equation (\ref{kn-edo}), it follows that $\lambda_n = 0$ in $\Omega_0$, which is a contradiction.

Therefore, we have $\lambda_n \equiv 0$ in $\Omega$. It follows by (\ref{einstein-tojeiro}) that 
\begin{equation}
|T|^2 = \dfrac{\varepsilon(n-1)-\rho}{\varepsilon(n-1)}. \label{mt-constant-einstein}
\end{equation}
In this case, equations (\ref{first-edo}) and (\ref{second-edo}) are rewritten as
\begin{eqnarray}
(p-1) \lambda_1 + (q-1) \lambda_2&=&0,  \label{first-edo-2-einstein} \\ 
\lambda_1 \lambda_2 &=& \left( \dfrac{n-2}{n-1} \right) (\rho - \varepsilon(n-1)). \label{second-edo-2-einstein}
\end{eqnarray}

Since $|T| \neq 0$, equations \eqref{mt}, (\ref{principal-curvatures}), \eqref{mt-constant-einstein}, \eqref{first-edo-2-einstein} and \eqref{second-edo-2-einstein} imply that
\begin{eqnarray}
(p-1) \lambda_1^s + (q-1) \lambda_2^s&=&0,  \label{first-edo-3-einstein} \\ 
\lambda_1^s \lambda_2^s &=& -\varepsilon(n-2). \label{second-edo-3-einstein}
\end{eqnarray}

We claim that the system above has no solution for $n>3$. In fact, Equations \eqref{first-edo-3-einstein} and \eqref{second-edo-3-einstein} imply that $\lambda_i^s$ are constants, unless $p=q=1$, which is not the case since $p+q=n-1$. Therefore, evaluating in $s=0$ we conclude that $g$ is isoparametric. By Cartan's identity \eqref{cartan-id}, $\lambda_1^g \lambda_2^g + \varepsilon = 0$. This fact with Equation \eqref{second-edo-3-einstein} in $s=0$ implies $n=3$, which is not the case. 

We conclude that, in the open subset $\Omega$ where $|T| > 0$, the sectional curvature is a constant $K_0 = \dfrac{\rho}{n-1}$. If $M^n \setminus \Omega$ has empty interior, we have by continuity that $M^n$ has constant sectional curvature $K_0$. Otherwise, there is an open subset $\mathcal{O} \subset M^n \setminus \Omega$, where $T \equiv 0$. As we saw at the beginning of the proof,  $\mathcal{O}$ is an open part of a  slice $\Q \times \left\{ t_1 \right\}$, for some $t_1 \in \R$, and the sectional curvature in $\mathcal{O}$ is constant equal to $\varepsilon$, which implies $\rho = (n-1) \varepsilon$. Since $\rho$ is constant in $M^n$, we must have $K_0 = \varepsilon$ and the sectional curvature in $\Omega \cup \mathcal{O}$ is $\varepsilon$, for any open subset $\mathcal{O}$ where $T \equiv 0$. Again we use the continuity of the sectional curvature to conclude that $M^n$ has constant sectional curvature equal to $\varepsilon$.
\end{proof}

\bibliographystyle{amsplain}

\end{document}